\documentclass[12pt,twoside]{amsart}
\usepackage{latexsym,amsmath,amsopn,amssymb,amsthm,amsfonts}
\usepackage[T2A]{fontenc}
\usepackage[cp1251]{inputenc}
\usepackage[english]{babel}
\usepackage{graphicx}

\textwidth 150mm \oddsidemargin=0.5cm \topmargin=-1cm
\textheight 220mm \evensidemargin=0.5cm

\newtheorem{theorem}{Theorem}[section]

\newtheorem{statement}{Statement}[section]

\begin{document}
	
\vspace{0.5cm}
\title[Hereditary classes of structures: study of their universal theories]{Axiomatizability of hereditary classes of structures of finite and infinite languages and decidability of their universal theories}
\author{A.~V.~Ilev}
\thanks{The work was carried out within the framework of the State Contract to the IM SB RAS, project FWNF--2022--0003.}
\address{Artem Victorovich Ilev,
\newline\hphantom{II} Sobolev Institute of Mathematics of the SB RAS,
\newline\hphantom{II} 13 Pevtsova st., Omsk, 644099, Russia}
\email{artyom\_iljev@mail.ru}

\begin{abstract}
In the paper hereditary classes of ${\rm L}$\nobreakdash-structures are studied with 
\linebreak
language of the form ${{\rm L} = {\rm L_{fin}} \cup {\rm L_\infty}}$, where ${{\rm L_{fin}} = \langle R_1,R_2,\ldots, R_m, = \rangle}$ and 
\linebreak
${{\rm L_\infty} = \langle R_{m+1}, R_{m+2}, \ldots \rangle}$, and also in~${\rm L_\infty}$ the number of predicates of each arity is finite, all predicates are ordered in ascending of their arities and satisfying the property of non\nobreakdash-repetition of elements.
A~class of ${\rm L}$\nobreakdash-structures is called {\it hereditary} if it is closed under substructures.
It is proved that class of ${\rm L}$\nobreakdash-structures is hereditary if and only if it can be defined in terms of forbidden substructures.
A~class of ${\rm L}$\nobreakdash-structures is called {\it universal axiomatizable} if there is a set~$Z$ of universal ${\rm L}$\nobreakdash-sentences such that the class consists of all structures that satisfy~$Z$.
The problems of universal axiomatizability of hereditary classes of ${\rm L}$\nobreakdash-structures are considered in the paper.
It is shown that hereditary class of ${\rm L}$\nobreakdash-structures is universal axiomatizable if and only if it can be defined in terms of finite forbidden substructures.
It is proved that the universal theory of any axiomatizable hereditary class of ${\rm L}$\nobreakdash-structures with recursive set of minimal forbidden substructures is decidable.
\vspace{3mm}

\noindent {\it Keywords and phrases:} structure, hereditary class, universal theory, universal axiomatizability, decidability.
\end{abstract}
\maketitle

\section{Preliminary information}

Let's remember basic definitions of model theory.
All necessary notions and results can be also found in~\cite{ep79, m02}.

\medskip

{\it Language} (or {\it signature}) ${{\rm L}={\rm R} \cup {\rm F} \cup {\rm C}}$ is a~sequence of the following sets:

(1) set of {\it predicate symbols}~${\rm R}$;

(2) set of {\it functional symbols}~${\rm F}$;

(3) set of {\it constant symbols}~${\rm C}$;
\\
where each predicate symbol ${R \in {\rm R}}$ and each function symbol ${F \in {\rm F}}$ are associated with a natural numbers~$n_R$ and~$n_F$ correspondently, which are called {\it arities}.

An {\it ${\rm L}$\nobreakdash-structure} (or {\it algebraic system of language~${\rm L}$}) is a~sequence of the form
$${\mathcal A} = \langle A;\ R^{\mathcal A}, F^{\mathcal A}, c^{\mathcal A}\rangle,$$
where $A$~--- is a~non-empty set called {\it the basic set} of the structure~$\mathcal A$;
each predicate symbol ${R \in {\rm R}}$ corresponds to a $n_R$\nobreakdash-ary relation ${R^{\mathcal A} \subseteq A^{n_R}}$;
each function symbol ${F \in {\rm F}}$ corresponds to a $n_F$\nobreakdash-ary function ${F^{\mathcal A} : A^{n_F} \rightarrow A}$;
each constant symbol ${c \in {\rm C}}$ corresponds to some element ${c^{\mathcal A} \in A}$.
Further we will use the short notation ${{\mathcal A} = \langle A, {\rm L} \rangle}$ for describing ${\rm L}$\nobreakdash-structures.
An ${\rm L}$\nobreakdash-structure~$\mathcal A$ is called a~{\it model} if it doesn't contain functions.

Two ${\rm L}$\nobreakdash-structures ${{\mathcal A} = \langle A, {\rm L} \rangle}$ and ${{\mathcal B} = \langle B, {\rm L} \rangle}$ are called {\it isomorphic} if there is an isomorphism ${f: A \rightarrow B}$ preserving their predicates and functions.

An ${\rm L}$\nobreakdash-structure ${{\mathcal A}=\langle A, {\rm L} \rangle}$ is called a {\it substructure} of ${\rm L}$\nobreakdash-structure ${{\mathcal B}=\langle B, {\rm L} \rangle}$ if

(1) ${A \subseteq B}$;

(2) functions and predicates in~${\mathcal A}$ are the restrictions on~$A$ of corresponding functions and predicates in~${\mathcal B}$;

(3) the set~$A$ is closed under functions.

A~{\it formula of the language~${\rm L}$} is a~formula for the first-order predicate calculus with equality, the non-logical constants of which are contained in~${\rm L}$.
A~formula without free variables is called a {\it sentence}.
The truth of a sentence $\varphi$ in the ${\rm L}$\nobreakdash-structure~${\mathcal A}$ is denoted by ${{\mathcal A} \models \varphi}$.
A~sentence~$\varphi$ is called a~{\it universal sentence} or a~{\it $\forall$\nobreakdash-sentence} if ${\varphi = \forall x_1\ldots\forall x_n\ \psi}$, where $\psi$~--- quantifier-free formula containing no variables except ${x_1,\ldots,x_n}$.
A~sentence~$\varphi$ is called an~{\it existential sentence} or an~{\it $\exists$\nobreakdash-sentence} if ${\varphi = \exists x_1\ldots\exists x_n\ \psi}$, where $\psi$~--- quantifier-free formula containing no variables except ${x_1,\ldots,x_n}$.

\medskip

An~{\it abstract class of ${\rm L}$\nobreakdash-structures} is a~family of ${\rm L}$\nobreakdash-structures which is closed under isomorphism.
We will consider only abstract classes.
A~class of ${\rm L}$\nobreakdash-structures is called {\it hereditary} if it is closed under substructures.

A~class~$\textbf{\em K}$ of ${\rm L}$\nobreakdash-structures is called {\it axiomatizable} if there is a~set~$Z$ of ${\rm L}$\nobreakdash-sentences such that~$\textbf{\em K}$ consists of all ${\rm L}$\nobreakdash-structures that satisfy~$Z$.
The set~$Z$ is said to be a~{\it set of axioms} for the class~$\textbf{\em K}$.
If the set~$Z$ is finite, then~$\textbf{\em K}$ is called {\it finitely axiomatizable}.
If the set~$Z$ consists only of universal sentences, then~$\textbf{\em K}$ is called {\it universal axiomatizable}.
A~class~$\textbf{\em K}$ is called {\it recursive axiomatizable} if it can be defined by a~{\it recursive set of axioms}, i.\,e., there is an algorithm which correctly decides whether any ${\rm L}$\nobreakdash-sentence belongs to the set~$Z$.

The ${\rm L}$\nobreakdash-sentences $\varphi_1$ and $\varphi_2$ are called {\it equivalent} in the class~$\textbf{\em K}$ of ${\rm L}$\nobreakdash-strucrures, if for any structure~${\mathcal A}$ of the class~$\textbf{\em K}$
$${\mathcal A} \models \varphi_1 \Leftrightarrow {\mathcal A} \models \varphi_2.$$

Let~$\textbf{\em K}$ be a~class of ${\rm L}$\nobreakdash-structures.
The {\it (elementary) theory of a~class~$\textbf{K}$} is the set $Th(\textbf{\em K})$ of all ${\rm L}$\nobreakdash-sentences, which are true in all structures from~$\textbf{\em K}$.
A~theory is called {\it decidable} if there is an algorithm that for any ${\rm L}$\nobreakdash-sentence determines whether or not this sentence belongs to the theory.
A~set of all $\forall$\nobreakdash-sentences of $Th(\textbf{\em K})$ is called the {\it universal theory} or {\it $\forall$\nobreakdash-theory of the class~$\textbf{K}$}.
A~set of all $\exists$\nobreakdash-sentences of $Th(\textbf{\em K})$ is called the {\it existential theory} or {\it $\exists$\nobreakdash-theory of the class~$\textbf{K}$}.

Let $\textbf{\em H}$ be a~set of ${\rm L}$\nobreakdash-structures.
Then the class $Forb(\textbf{\em H})$, which consists of all ${\rm L}$\nobreakdash-structures that don't contain substrucrures from~$\textbf{\em H}$, is an abstract class, i.\,e. it is closed under isomorphism.
This class can be defined by using ${\rm L}$\nobreakdash-structures ${{\mathcal A} \in \textbf{\em H}}$ as {\it forbidden substructures}.
We will say that the class~$\textbf{\em K}$ of ${\rm L}$\nobreakdash-structures {\it can be defined in terms of forbidden substructures} if ${\textbf{\em K} = Forb(\textbf {\em H})}$ for some set~$\textbf{\em H}$.

A~set of ${\rm L}$\nobreakdash-structures~$\textbf{\em H}$ is called a~{\it set of minimal forbidden substructures} for the class~$\textbf{\em K}$, if ${\textbf{\em K} = Forb(\textbf{\em H})}$ and also for any ${\rm L}$\nobreakdash-structure ${{\mathcal A} \in \textbf{\em H}}$ every its substructure ${{\mathcal A}_1 \not \in \textbf{\em H}}$.

\begin{statement}
\label{th10}
Let ${\textbf{K} = Forb(\textbf{H})}$.
A~set~$\textbf{H}$ is a~set of minimal forbidden substructures for the class~$\textbf{K}$ if and only if~$\textbf{H}$ is an inclusion-minimal set of forbidden substructures for the class~$\textbf{K}$, i.\,e. ${\textbf{K} \not = Forb(\textbf{H}_1)}$ for all ${\textbf{H}_1 \subset \textbf{H}}$.
\end{statement}

\begin{proof}{\it Necessity.}
Let's assume the opposite, that there is a~set ${\textbf{\em H}_1 \subset \textbf{\em H}}$ such that ${\textbf{\em K} = Forb(\textbf{\em H}_1)}$.
Then there is ${\rm L}$\nobreakdash-structure ${{\mathcal A} \in \textbf{\em H} \setminus \textbf{\em H}_1}$ and also ${{\mathcal A} \not \in Forb(\textbf{\em H}_1)}$, i.\,e. there is ${\rm L}$\nobreakdash-structure ${{\mathcal A}_1 \in \textbf{\em H}_1}$ such that ${\mathcal A}_1$~is a~substructure of~${\mathcal A}$.
But since ${{\mathcal A}_1 \in \textbf{\em H}}$, then there is a~contradiction with the definition of the set of minimal forbidden substructures for the class~$\textbf{\em K}$.

\medskip

{\it Sufficiency.}
Let's assume the opposite, that there is a~${\rm L}$\nobreakdash-structure ${{\mathcal A} \in \textbf{\em H}}$ and 
\linebreak
its substructure ${{\mathcal A}_1 \in \textbf{\em H}}$.
Consider the set ${\textbf{\em H}_1 = \textbf{\em H} \setminus \{{\mathcal A}\}}$.
It's obvious that 
\linebreak
${\textbf{\em K} = Forb(\textbf{\em H}_1)}$, because all ${\rm L}$\nobreakdash-structures which don't contain~${\mathcal A}$ as a~substructure mustn't contain ${{\mathcal A}_1 \in \textbf{\em H}_1}$ as a~substructure.
I.\,e. there is a~contradiction with the condition of the statement.
\end{proof}

A~set of forbidden substructures of a~language~${\rm L}$ is called {\it recursive} if there exists a~Godel numbering~$g$ of these substructures such that the set of their numbers is recursive, i.\,e. there is an algorithm that for any natural number will output <<YES>> if it is belongs to the set of these numbers and will output <<NO>> otherwise.

\begin{statement}[Criterion for universal axiomatizability]{\rm \cite{ep79}}
\label{th11} 
Let~$\textbf{K}$ be an axioma\-tizable class of ${\rm L}$\nobreakdash-structures.
A~class~$\textbf{K}$ is universal axiomatizable if and only if it is closed under substructures.
\end{statement}

In this paper we consider only two types of structures.
Firstly, models ${{\mathcal A} = \langle A, {\rm L_{fin}} \rangle}$ of finite languages with equality, in which ${{\rm L_{fin}} = \langle R_1,R_2,\ldots, R_m, = \rangle}$.
Secondly, models ${{\mathcal A} = \langle A, {\rm L} \rangle}$ of infinite languages with equality,  in which ${{\rm L} = {\rm L_{fin}} \cup {\rm L_\infty}}$, where ${{\rm L_\infty} = \langle R_{m+1}, R_{m+2}, \ldots \rangle}$, and in~${\rm L_\infty}$ the number of predicates of each arity is finite, all predicates are ordered in ascending of their arities and satisfying the property of non\nobreakdash-repetition of elements, i.\,e. for all ${R_k \in {\rm L_\infty}}$:

\medskip

$\bullet$ $\forall x_1 \ldots \forall x_l\ [R_k(x_1,\ldots,x_l) \rightarrow \bigwedge\limits_{i \not = j} (x_i \not = x_j)]$.
\\
We denote by~$n_k$ the arity of the corresponding predicate~$R_k$.

\medskip

Since ${{\rm L_{fin}} \subset {\rm L}}$ for all ${m \in {\mathbb N}}$, then the finite case will not be mentioned specially in statements and their proofs, which are true for any considered language~${\rm L}$.

\section{Axiomatizable hereditary classes} 

We consider some theorems, which is necessary to indicate the connection between hereditary classes of ${\rm L}$-structures and their forbidden substructures.

\begin{theorem}
\label{th21}
An abstract class~$\textbf{K}$ of ${\rm L}$\nobreakdash-structures is hereditary if and only if it can be defined in terms of forbidden substructures.
\end{theorem}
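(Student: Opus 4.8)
The plan is to prove the two implications separately, with the harder direction being the forward one (hereditary $\Rightarrow$ definable by forbidden substructures).

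For the easy direction, suppose $\textbf{\em K} = Forb(\textbf{\em H})$ for some set $\textbf{\em H}$ of ${\rm L}$-structures. I would take an arbitrary ${\mathcal A} \in \textbf{\em K}$ and an arbitrary substructure ${\mathcal B}$ of ${\mathcal A}$, and show ${\mathcal B} \in \textbf{\em K}$. If ${\mathcal B} \notin \textbf{\em K} = Forb(\textbf{\em H})$, then ${\mathcal B}$ contains some ${\mathcal C} \in \textbf{\em H}$ as a substructure; but then, since the substructure relation is transitive, ${\mathcal C}$ is also a substructure of ${\mathcal A}$, contradicting ${\mathcal A} \in Forb(\textbf{\em H})$. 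Hence $\textbf{\em K}$ is closed under substructures. (The only thing worth stating carefully here is transitivity of the substructure relation, which is immediate from the definition.)

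For the forward direction, suppose $\textbf{\em K}$ is hereditary. The natural candidate is $\textbf{\em H} = \{ {\mathcal B} : {\mathcal B} \text{ is an } {\rm L}\text{-structure with } {\mathcal B} \notin \textbf{\em K} \}$, the complement of $\textbf{\em K}$ in the class of all ${\rm L}$-structures, and I would show $\textbf{\em K} = Forb(\textbf{\em H})$. For the inclusion $\textbf{\em K} \subseteq Forb(\textbf{\em H})$: if ${\mathcal A} \in \textbf{\em K}$ but ${\mathcal A} \notin Forb(\textbf{\em H})$, then ${\mathcal A}$ contains some ${\mathcal B} \in \textbf{\em H}$ as a substructure, i.e. a substructure ${\mathcal B} \notin \textbf{\em K}$; this contradicts that $\textbf{\em K}$ is closed under substructures. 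For the reverse inclusion $Forb(\textbf{\em H}) \subseteq \textbf{\em K}$: if ${\mathcal A} \in Forb(\textbf{\em H})$, then in particular ${\mathcal A}$ does not contain itself as a substructure from $\textbf{\em H}$; but every structure is a substructure of itself, so ${\mathcal A} \notin \textbf{\em H}$, which by definition of $\textbf{\em H}$ means ${\mathcal A} \in \textbf{\em K}$. Since $\textbf{\em K}$ is an abstract class (closed under isomorphism), and $Forb(\textbf{\em H})$ is automatically abstract by the remark preceding Statement~\ref{th10}, the two classes coincide as abstract classes.

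The argument is essentially formal set-theoretic manipulation, so there is no serious obstacle; the only point requiring mild care is that $\textbf{\em H}$ as defined is a proper class rather than a set, which is harmless here since $Forb$ is defined for arbitrary collections of structures and the paper works with abstract classes throughout — if one wishes to stay within sets, one can instead take $\textbf{\em H}$ to be a set of representatives, one from each isomorphism type of ${\rm L}$-structure not in $\textbf{\em K}$, and the same two inclusions go through verbatim using closure of $\textbf{\em K}$ under isomorphism. I expect the write-up to consist of these two short chains of implications.
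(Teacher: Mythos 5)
Your proposal is correct and follows essentially the same route as the paper: the same witness $\textbf{\em H}$ (the complement of $\textbf{\em K}$ in the class of all ${\rm L}$\nobreakdash-structures) for the forward direction, the same two inclusions, and the same transitivity-of-substructure argument for the converse. Your added remark on the set-versus-class issue and on reflexivity of the substructure relation is a minor clarification the paper glosses over, not a change of method.
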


\begin{proof}{\it Necessity.}
Let $\textbf{\em K}$~be a~hereditary class of ${\rm L}$\nobreakdash-structures, i.\,e. for any ${\rm L}$\nobreakdash-struc\-tures~${\mathcal A}_1$ and~${\mathcal A}_2$ if ${{\mathcal A}_1 \in \textbf{\em K}}$ and ${\mathcal A}_2$ is an arbitrary substructure of ${\mathcal A}_1$, then ${{\mathcal A}_2 \in \textbf{\em K}}$.
We consider the class $\textbf{\em H}$~--- the addition to the class~$\textbf{\em K}$ in the class of all ${\rm L}$\nobreakdash-structures.
Since ${{\mathcal A}_2 \not \in \textbf{\em H}}$, then ${{\mathcal A}_1 \in Forb(\textbf{\em H})}$ and therefore ${\textbf{\em K} \subseteq Forb(\textbf{\em H})}$.

Now we consider an arbitrary ${\rm L}$\nobreakdash-structure ${{\mathcal A}_3 \in Forb(\textbf{\em H})}$, i.\,e. every substructure of which, including itself~${\mathcal A}_3$, isn't contained in~$\textbf{\em H}$.
But then ${{\mathcal A}_3 \in \textbf{\em K}}$ and, therefore, ${Forb(\textbf{\em H}) \subseteq \textbf{\em K}}$.

Thus, ${\textbf{\em K} = Forb(\textbf{\em H})}$, i.\,e. the class~$\textbf{\em K}$ of ${\rm L}$\nobreakdash-structures can be defined in terms of forbidden substructures.

\medskip

{\it Sufficiency.}
Let ${\textbf{\em K} = Forb(\textbf{\em H})}$~be a~class of ${\rm L}$\nobreakdash-structures, which can be defined in terms of forbidden substructures.
Let's assume the opposite, that there is ${\rm L}$\nobreakdash-structure ${{\mathcal A}_1 \in \textbf{\em K}}$ and its substructure~${\mathcal A}_2$ such that ${{\mathcal A}_2 \not \in \textbf{\em K}}$.
Then ${\rm L}$\nobreakdash-structure~${\mathcal A}_2$ contains substructure~${\mathcal A}_3$ such that ${{\mathcal A}_3 \in \textbf{\em H}}$.
But since~${\mathcal A}_3$ is also a~substructure of~${\mathcal A}_1$, therefore, ${{\mathcal A}_1 \not \in Forb(\textbf{\em H})}$.
It is a~contradiction.

Thus, for any ${\rm L}$\nobreakdash-structure ${{\mathcal A}_1 \in \textbf{\em K}}$ every its substructure contains in~$\textbf{\em K}$.
Therefore, $\textbf{\em K}$~is hereditary class of ${\rm L}$\nobreakdash-structures.
\end{proof}

\begin{theorem}
\label{th22}
Let ${\textbf{K} = Forb(\textbf{H})}$, and all ${\rm L}$\nobreakdash-structures of the set~$\textbf{H}$ are finite; ${\mathcal A}$ is an infinite ${\rm L}$\nobreakdash-structure, in which every finite substructure belongs to the class~$\textbf{K}$.
Then~${\mathcal A}$ is also belongs to the class~$\textbf{K}$.
\end{theorem}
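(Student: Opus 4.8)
The plan is to argue by contradiction: suppose $\mathcal{A} \notin \textbf{\em K} = Forb(\textbf{\em H})$. Since $\textbf{\em K}$ is defined by forbidden substructures, $\mathcal{A}$ must contain some substructure $\mathcal{B}$ isomorphic to a member of $\textbf{\em H}$. By hypothesis every structure in $\textbf{\em H}$ is finite, so $\mathcal{B}$ is a finite substructure of $\mathcal{A}$. But then $\mathcal{B}$ is a finite substructure of $\mathcal{A}$ that contains a forbidden substructure (namely itself), hence $\mathcal{B} \notin \textbf{\em K}$, contradicting the assumption that every finite substructure of $\mathcal{A}$ belongs to $\textbf{\em K}$. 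Therefore $\mathcal{A} \in \textbf{\em K}$.

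The only subtlety worth spelling out is why $\mathcal{B}$ itself qualifies as a finite substructure of $\mathcal{A}$ and why membership of $\mathcal{B}$ in the image of $\textbf{\em H}$ forces $\mathcal{B} \notin Forb(\textbf{\em H})$. For the first point: a substructure of $\mathcal{A}$ is determined by choosing a subset of the base set closed under the functions; since the languages considered here are relational (the structures are models, containing no functions), every finite subset of $A$ carries a finite substructure, in particular the base set of $\mathcal{B}$ does. For the second point: $\mathcal{B}$ contains itself as a substructure (the identity embedding), and $\mathcal{B}$ is isomorphic to an element of $\textbf{\em H}$; since $\textbf{\em K}$ was assumed to be an abstract class — closed under isomorphism — and $Forb(\textbf{\em H})$ excludes every structure having a substructure isomorphic to a member of $\textbf{\em H}$, we get $\mathcal{B} \notin Forb(\textbf{\em H}) = \textbf{\em K}$.

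There is essentially no hard part: the finiteness hypothesis on $\textbf{\em H}$ does all the work, collapsing the usual compactness-style argument (which one would need if $\textbf{\em H}$ could contain infinite structures) into a one-line observation. The statement is really just the remark that ``$\mathcal{A}$ omits all members of $\textbf{\em H}$'' is, when the members of $\textbf{\em H}$ are finite, a property witnessed on finite substructures of $\mathcal{A}$. If I wanted to emphasize the contrast, I could add a sentence noting that the conclusion can fail if $\textbf{\em H}$ is allowed to contain infinite structures, but for the proof itself the contradiction above suffices.
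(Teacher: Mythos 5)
Your proposal is correct and follows essentially the same argument as the paper: assume $\mathcal{A}\notin\textbf{\em K}$, extract a substructure $\mathcal{B}$ lying in (the isomorphism closure of) $\textbf{\em H}$, note that $\mathcal{B}$ is finite by hypothesis on $\textbf{\em H}$, and derive a contradiction with the assumption that every finite substructure of $\mathcal{A}$ belongs to $\textbf{\em K}$. The extra remarks on isomorphism closure and the absence of function symbols are harmless elaborations of the same one-step argument.
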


\begin{proof}
Let's assume the opposite, that ${{\mathcal A} \not \in \textbf{\em K}}$.
Then there is its substructure~${\mathcal B}$ such that ${{\mathcal B} \in \textbf{\em H}}$.
But due to the condition of the theorem ${\mathcal B}$~must be finite~--- it is a~contradiction with that all finite substructures of~${\mathcal A}$ belong to the class~$\textbf{\em K}$ and, therefore, not belong to the set~$\textbf{\em H}$.

Thus, ${{\mathcal A} \in \textbf{\em K}}$.
\end{proof}

\begin{theorem}
\label{th23}
A~hereditary class of ${\rm L}$\nobreakdash-structures is (universal) axiomatizable if and only if it can be defined in terms of finite forbidden substructures.
\end{theorem}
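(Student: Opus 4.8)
The plan is to prove both directions and to lean on the two tools already available: Statement~\ref{th11} lets me pass freely between ``axiomatizable'' and ``universal axiomatizable'' for a hereditary class, and Theorem~\ref{th21} supplies heredity automatically on the forbidden-substructures side. So it is enough to establish: (a) if $\textbf{K}$ is universal axiomatizable then $\textbf{K} = Forb(\textbf{H})$ for some set $\textbf{H}$ of \emph{finite} $\mathrm{L}$-structures; and (b) if $\textbf{K} = Forb(\textbf{H})$ with every member of $\textbf{H}$ finite, then $\textbf{K}$ is universal axiomatizable. Combining these with Theorem~\ref{th21} and Statement~\ref{th11} covers all the combinations named in the statement.

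For (a) I would write $\textbf{K} = Mod(Z)$ with $Z$ a set of universal sentences, each $\varphi = \forall x_1\ldots\forall x_n\,\psi(x_1,\ldots,x_n)$ with $\psi$ quantifier-free. The observation is that $\mathcal{B}\not\models\varphi$ exactly when some tuple $\bar b$ from $\mathcal{B}$ satisfies $\neg\psi(\bar b)$, and since $\neg\psi$ is quantifier-free its value on $\bar b$ depends only on the substructure $\mathcal{B}_0$ of $\mathcal{B}$ with universe $\{b_1,\ldots,b_n\}$ --- a genuine substructure, because in our setting the language is relational --- which has at most $n$ elements. Hence I set $\textbf{H}_\varphi$ to be the collection, up to isomorphism, of all finite $\mathrm{L}$-structures $\mathcal{C}$ of cardinality $\le n$ that admit an enumeration $c_1,\ldots,c_n$ of their universe with $\mathcal{C}\models\neg\psi(c_1,\ldots,c_n)$; a routine check gives $Mod(\varphi) = Forb(\textbf{H}_\varphi)$, whence $\textbf{K} = \bigcap_{\varphi\in Z}Forb(\textbf{H}_\varphi) = Forb\bigl(\bigcup_{\varphi\in Z}\textbf{H}_\varphi\bigr)$, and $\bigcup_{\varphi\in Z}\textbf{H}_\varphi$ is a set of finite structures.

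For (b) the task is to attach to each finite $\mathcal{C}\in\textbf{H}$, say with $|C| = k$, a universal $\mathrm{L}$-sentence $\varphi_{\mathcal{C}}$ expressing ``$\mathcal{C}$ is not a substructure of me'', so that $Z = \{\varphi_{\mathcal{C}} : \mathcal{C}\in\textbf{H}\}$, together with the non-repetition axioms (themselves universal), axiomatizes $Forb(\textbf{H})$. The obvious way to say ``the induced substructure on $\{x_1,\ldots,x_k\}$ is isomorphic to $\mathcal{C}$'' asks for an infinite conjunction over all predicates of $\mathrm{L}$, which is not first-order; removing this is the main obstacle, and it is exactly where the hypotheses on $\mathrm{L}_\infty$ are used. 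Since $\mathcal{C}$ has $k$ elements, the non-repetition property forces every $R_j\in\mathrm{L}_\infty$ with $n_j > k$ to be empty on $\mathcal{C}$, and likewise empty on every $k$-element substructure of any $\mathrm{L}$-structure; and as $\mathrm{L}_\infty$ has only finitely many predicates of each arity, only the \emph{finite} sublanguage $\mathrm{L}^{(k)} := \mathrm{L_{fin}}\cup\{R_j\in\mathrm{L}_\infty : n_j\le k\}$ is relevant. So I would take $\delta_{\mathcal{C}}(x_1,\ldots,x_k)$ to be the quantifier-free formula --- a finite conjunction --- stating that $x_1,\ldots,x_k$ are pairwise distinct and that $x_i\mapsto c_i$ preserves and reflects every $R\in\mathrm{L}^{(k)}$, and put $\varphi_{\mathcal{C}} := \forall x_1\ldots\forall x_k\,\neg\delta_{\mathcal{C}}(x_1,\ldots,x_k)$. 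Then $\mathcal{B}\not\models\varphi_{\mathcal{C}}$ iff some $k$-tuple of distinct elements of $\mathcal{B}$ induces a substructure isomorphic to $\mathcal{C}$ (the higher-arity predicates matching automatically on both sides, being empty), i.e.\ iff $\mathcal{C}$ embeds into $\mathcal{B}$ as a substructure; hence $\mathcal{B}\models Z$ iff $\mathcal{B}\in Forb(\textbf{H}) = \textbf{K}$.

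Finally I would assemble the equivalences: a class defined by (finite) forbidden substructures is hereditary by Theorem~\ref{th21}, an axiomatizable hereditary class is universal axiomatizable by Statement~\ref{th11}, and (a), (b) close the loop. The points that need a little care, but no real difficulty, are that all substructures here are finite-from-a-finite-generator (immediate, as $\mathrm{L}$ is relational with no functions), that the ``relevant'' sublanguage $\mathrm{L}^{(k)}$ is genuinely finite (this uses both finiteness of predicates per arity and the non-repetition axioms), and the bookkeeping of isomorphism types so that each $\textbf{H}_\varphi$ and the assembled $\textbf{H}$ are honest sets.
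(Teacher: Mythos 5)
Your proposal is correct and follows essentially the same route as the paper: necessity by converting each universal axiom $\varphi=\forall\bar x\,\psi$ into the (finite, up to isomorphism) family of structures of size at most $n$ realizing $\neg\psi$, and sufficiency by encoding each finite forbidden structure as a universal sentence over the finite relevant sublanguage, using non\nobreakdash-repetition and the finiteness of predicates per arity. The only cosmetic difference is that the paper performs the necessity step syntactically (completing the conjuncts of the PDF of $\neg\varphi$) where you describe the same set of structures semantically, and the paper routes the infinite-structure case of sufficiency through Theorem~\ref{th22} while your direct equivalence handles it without that detour.
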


\begin{proof}{\it Necessity.}
By definition, the hereditary class of ${\rm L}$\nobreakdash-structures is closed under substructures, therefore, due to the criterion of universal axiomatizability~\ref{th11} \cite[p.~165, Theorem~5]{ep79} any axiomatizable hereditary class of ${\rm L}$\nobreakdash-structures is $\forall$\nobreakdash-axioma\-tizable, therefore, every its axiom can be considered as a~$\forall$\nobreakdash-sentence.

Then the set of forbidden substructures of the hereditary class~$\textbf{\em K}$, which exists by Theorem~\ref{th21}, can be defined in the following way.

For every axiom~$\varphi$ we define a~finite set~$\textbf{\em H}_\varphi$ of forbidden substructures with number of elements from $1$~to~$p$, where~$p$ is a~number of variables in this axiom.
Its negation~$\neg \varphi$ is equivalent to the sentence ${\exists x_1\ldots\exists x_p\ \psi}$, where $\psi$ is a~quantifier-free formula in prenex disjunctive form (PDF), i.\,e. ${\psi = \psi_1 \vee \ldots \vee \psi_r}$, where ${\psi_1, \ldots, \psi_r}$ are conjuncts.
If any of that conjuncts doesn't contains the factors ${(x_i = x_j)}$ and ${(x_i \not = x_j)}$, then it is supplemented by the condition ${(x_i = x_j) \vee (x_i \not = x_j)}$.
Similarly for all ${R_k \in {\rm L_{fin}}}$ and finite number of predicates ${R_k \in {\rm L_\infty}}$, arity of which doesn't exeed~$p$, if any conjunct doesn't contains the factors $R_k(t_1,\ldots,t_l)$ and $\neg R_k(t_1,\ldots,t_l)$, then it is supplemented by the condition ${R_k(t_1,\ldots,t_l) \vee \neg R_k(t_1,\ldots,t_l)}$ for all 
\linebreak
${\{t_1,\ldots,t_l\}\subseteq \{x_1,\ldots,x_p\}}$, where ${l=n_k}$.
As a~result, we can go to a~sentence in PDF, that equivalent to $\neg \varphi$, each conjunct of which will either define a~substructure of the language~${\rm L}$ with a~fixed number of elements from $1$~to~$p $ and all possible fixed sets of elements that satisfy or not satisfy the predicates~$R_k$ of the language~${\rm L}$, or will contradict the restrictions imposed on the ${\rm L}$\nobreakdash-structures and must be excluded from consideration.

Then, we unify the sets~$\textbf{\em H}_\varphi$ for all axioms~$\{\varphi\}$ and obtain a~family~$\textbf{\em H}$ of finite forbidden substructures of the language~${ \rm L}$ for a~given hereditary class~$\textbf{\em K}$.

\medskip

{\it Sufficiency.}
Any finite ${\rm L}$\nobreakdash-structure can be associated with a~condition for the existence of a~substructure, which isomorphic to this structure.
It looks like 
\linebreak
${\varphi = \exists x_1 \ldots \exists x_p\ \psi}$, where $p$ is the number of elements of this ${\rm L}$\nobreakdash-structure, and $\psi$ is a~conjunct that contains conditions for the pairwise difference of all variables ${x_1,\ldots,x_p}$, while for all predicates ${R_k \in {\rm L_{fin}}}$ and a~finite number of predicates 
\linebreak
${R_k \in {\rm L_\infty}}$, the arity of which does not exceed~$p$, as well as all possible sets
\linebreak
${\{t_1,\ldots,t_l\}\subseteq \{x_1,\ldots,x_p\}}$, where ${l=n_k}$, the conjunct contains a~factor $R_k(t_1,\ldots,t_l)$ or $\neg R_k(t_1,\ldots,t_l)$.

We consider an arbitrary hereditary class ${\textbf{\em K} = Forb(\textbf{\em H})}$ of ${\rm L}$\nobreakdash-structures, where $\textbf{\em H}$ is a~set of finite forbidden substructures.
Then the axiomatics of the class~$\textbf{\em K}$ must consist of the set of axioms, each of which corresponds to one of the forbidden substructures from the set $\textbf{\em H}$, i.\,e. every axiom is the negations of the corresponding sentence ${\varphi = \exists x_1 \ldots \exists x_p\ \psi}$.

And due to the theorem~\ref{th22} such axioms will be sufficient to identify not only finite ${\rm L}$\nobreakdash-structures, which belong to the class~$\textbf{\em K}$, but also infinite ones.
Thus, any ${\rm L}$\nobreakdash-structure that satisfies the set of axioms~$\{\neg\varphi\}_{\textbf{\em H}}$ is contained in the hereditary class~$\textbf{\em K}$, and all of these axioms are $\forall$\nobreakdash-sentences, i.\,e. the class~$\textbf{\em K}$ is universally axiomatizable.
\end{proof}

\section{Decidability of universal theories of hereditary classes}

Establishing decidability of a~theory of any class~$\textbf{\em K}$ of ${\rm L}$\nobreakdash-structures makes it possible in principle to get an exhaustive list of properties of structures from this class.
Since decidable theories in complete form are quite rare, then obtaining a~proof of the decidability of universal theories and constructing a~corresponding algorithms is very actual problem.

\begin{theorem}
\label{th24}
The universal theory of any axiomatizable hereditary class of ${\rm L}$\nobreakdash-struc\-tures with recursive set of minimal forbidden substructures is decidable.
\end{theorem}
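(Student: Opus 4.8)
The plan is to reduce the question ``${\varphi \in Th(\textbf{K})}$?'' for a universal sentence ${\varphi = \forall x_1 \ldots \forall x_n\ \psi}$ (with $\psi$ quantifier-free in the variables ${x_1, \ldots, x_n}$) to a finite inspection of all sufficiently small ${\rm L}$\nobreakdash-structures. First I would record the elementary reflection principle for models: since the structures under consideration contain no function symbols and $\psi$ is quantifier-free, the truth of $\psi$ under an assignment depends only on the atomic diagram of the tuple named by that assignment. Hence, if some ${{\mathcal C} \in \textbf{K}}$ and some ${\bar c \in C^n}$ satisfy ${{\mathcal C} \models \neg\psi(\bar c)}$, then the substructure ${{\mathcal A} \subseteq {\mathcal C}}$ with universe ${\{c_1, \ldots, c_n\}}$ already satisfies ${{\mathcal A} \models \neg\psi(\bar c)}$, and ${{\mathcal A} \in \textbf{K}}$ because $\textbf{K}$ is hereditary; conversely, any such small ${{\mathcal A} \in \textbf{K}}$ violating $\psi$ witnesses ${\varphi \notin Th(\textbf{K})}$. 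Therefore ${\varphi \in Th(\textbf{K})}$ if and only if ${{\mathcal A} \models \psi(\bar a)}$ for every ${\rm L}$\nobreakdash-structure ${\mathcal A}$ with ${|A| \le n}$ that belongs to $\textbf{K}$ and every assignment ${\bar a \in A^n}$.

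Next I would observe that the set of ${\rm L}$\nobreakdash-structures of cardinality at most~$n$ is, up to isomorphism, finite and can be listed effectively. Indeed ${\rm L_{fin}}$ is finite and ${\rm L_\infty}$ has only finitely many predicates of each arity; moreover, by the non\nobreakdash-repetition axioms, on a set of size at most~$n$ every predicate of arity greater than~$n$ is forced to be the empty relation, so it suffices to specify the (finitely many) predicates of arity at most~$n$, which yields finitely many structures. For each of these structures~${\mathcal A}$ and each of the at most~$n^n$ assignments ${\bar a \in A^n}$, the truth value of $\psi(\bar a)$ in~${\mathcal A}$ is computed directly from the atomic diagram of~${\mathcal A}$. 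So the whole test becomes algorithmic once one can decide, for a given finite~${\mathcal A}$, whether ${{\mathcal A} \in \textbf{K}}$.

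For that last ingredient I would use the hypotheses on~$\textbf{H}$, the recursive set of minimal forbidden substructures of~$\textbf{K}$. By definition ${\textbf{K} = Forb(\textbf{H})}$. By Theorem~\ref{th23} the axiomatizability of~$\textbf{K}$ forces ${\textbf{K} = Forb(\textbf{H}')}$ for some set~$\textbf{H}'$ of finite structures, and then~$\textbf{H}$ itself consists only of finite structures, since every forbidden ${\rm L}$\nobreakdash-structure lies above a finite forbidden substructure, hence above a minimal --- necessarily finite --- one. Now a finite~${\mathcal A}$ lies in~$\textbf{K}$ if and only if none of its (finitely many) substructures is isomorphic to a member of~$\textbf{H}$; since each such substructure is a finite ${\rm L}$\nobreakdash-structure and~$\textbf{H}$ is recursive, the Godel numbering of~$\textbf{H}$ supplies an algorithm deciding that membership, and looping over the finitely many substructures of~${\mathcal A}$ decides whether ${{\mathcal A} \in \textbf{K}}$. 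Combining the three steps would give the decision procedure, whose termination is clear because at every stage only finitely many objects are examined.

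The part that will need the most care is not any single computation but the correct exploitation of the standing hypotheses: that quantifier-free formulas reflect down to the $n$\nobreakdash-element substructure --- which is precisely where the restriction to models (no function symbols) is used, since with functions the generated substructure could be arbitrarily large; that the non\nobreakdash-repetition conditions on~${\rm L_\infty}$ keep the relevant fragment of the infinite language finite on small structures; and that ``recursive set of minimal forbidden substructures'' has to be converted into ``decidable membership for finite ${\rm L}$\nobreakdash-structures up to isomorphism'' (handling the finitely many isomorphic copies on a fixed universe), for which one invokes the axiomatizability of~$\textbf{K}$ via Theorem~\ref{th23} to know that~$\textbf{H}$ is a set of finite structures still satisfying ${Forb(\textbf{H}) = \textbf{K}}$.
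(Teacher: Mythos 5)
Your proposal is correct and follows essentially the same strategy as the paper: reduce the question to finitely many ${\rm L}$\nobreakdash-structures of size at most the number of variables (using the non\nobreakdash-repetition property to cut ${\rm L_\infty}$ down to finitely many relevant predicates), and decide membership of each such finite structure in~$\textbf{\em K}$ by testing its substructures against the recursive set of minimal forbidden substructures, which --- as you correctly justify via Theorem~\ref{th23} --- must all be finite. The only difference is presentational: you perform the reduction semantically, by quantifier-free reflection to the $n$\nobreakdash-element substructure and brute-force enumeration of all small structures, whereas the paper performs it syntactically, by completing the disjunctive normal form of $\neg\varphi$ until each conjunct describes a concrete finite structure; the two procedures examine the same finite collection of candidate models.
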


\begin{proof}
Due to the theorem~\ref{th23} we deal the case, where $Th_{\forall}(\textbf{\em K})$ is universal theory of arbitrary hereditary class~$\textbf{\em K}$ of ${\rm L}$\nobreakdash-structures, which defined in terms of finite forbidden substructures.
The following algorithm decides whether any universal ${\rm L}$\nobreakdash-sentence belongs to $Th_{\forall}(\textbf{\em K})$.

\bigskip

{\bf Algorithm for ${\rm L}$\nobreakdash-structures.}

\medskip

The input to the algorithm is an arbitrary universal sentence~$\varphi$.
Its negation~$\neg \varphi$ is transformed into a~sentence in a~prenex disjunctive form~(PDF) that is equivalent to~$\neg \varphi$ in the class of all ${\rm L}$\nobreakdash-structures.
The algorithm tries to construct an ${\rm L}$\nobreakdash-structure of the class~$\textbf{\em K}$ in which the sentence~$\neg \varphi$ will be true.
If it succeeds, then the sentence~$\varphi$ doesn't belong to the universal theory $Th_{\forall}(\textbf{\em K})$, and the algorithm outputs <<NO>>.
Otherwise, $\varphi$~belongs to this universal theory, and the algorithm outputs <<YES>>.

It is assumed that at each step the algorithm removes from the conjuncts of the current sentence all repeating factors when they occur.
This procedure has no effect on the equivalence of the sentense in ${\rm L}$\nobreakdash-structures and further we will not focus on it due to its naturalness.

\medskip

{\it Step~1.}
The algorithm formulates the sentence ${\neg \varphi = \exists x_1 \ldots \exists x_p\ \psi}$, where $\psi$~is a~quantifier free formula.
Then~$\neg \varphi$ is transformed to the equivalent sentence 
\linebreak
${\neg \varphi_1 = \exists x_1 \ldots \exists x_p\ \bigvee \limits_{r}\psi_r}$ in PDF, where each~$\psi_r$ is a~conjunct.

\medskip

{\it Step~2.}
The algorithm looks sequentially all conjuncts of the sentence~$\neg \varphi_1$.
If any conjunct~$\psi_r$ contains the variables $x_i$~and~$x_j$ but doesn't contain the factor ${(x_i = x_j)}$ or ${(x_i \not = x_j)}$, then the algorithm replaces the conjunct~$\psi_r$ by the disjunction ${[\psi_r \wedge (x_i=x_j)] \vee [\psi_r \wedge (x_i \not = x_j)]}$.
This procedure continues as long as possible.
Thus, we obtain an equivalent sentence~$\neg \varphi_2$, in each conjunct of which all its variables will be connected by equalities or inequalities.

\medskip

{\it Step~3.}
The algorithm looks sequentially all conjuncts of the sentence~$\neg \varphi_2$.
If any conjunct~$\psi_r$ contains the factor ${(x_i = x_j)}$, then the algorithm replaces the variable $x_j$~by~$x_i$ in the conjunct.
If any conjunct contains factors of the form ${(t = t)}$ where ${t \in \{x_1,\ldots,x_p\}}$, then these factors are removed as redundant.
If any conjunct contains factor of the form ${(t \not = t)}$, then such conjunct is removed from the sentence as identically false.
This procedure continues until all equalities are eliminated from all conjuncts.
Thus, we obtain an equivalent sentence~$\neg \varphi_3$ in PDF, in which each conjunct contains conditions for the pairwise difference of all its variables.

\medskip

{\it Step~4.}
The algorithm looks sequentially all conjuncts of the sentence~$\neg \varphi_3$.
For all predicates ${R_k \in {\rm L_{fin}}}$ if any conjunct~$\psi_r$ contains the variables ${t_1,\ldots,t_l}$, where ${t_i \in \{x_1,\ldots,x_p\}}$, but doesn't contain the factor $R_k(t_1,\ldots,t_l)$ or $\neg R_k(t_1,\ldots,t_l)$, 
\linebreak
then the algorithm replaces the conjunct~$\psi_r$ by the disjunction 
\linebreak
${[\psi_r \wedge R_k(t_1,\ldots,t_l)] \vee [\psi_r \wedge \neg R_k(t_1,\ldots,t_l)]}$.
This procedure continues as long as possible for all sets of repeatable variables ${\{t_1,\ldots,t_l\}\subseteq \{x_1,\ldots,x_p\}}$ of the conjunct, where ${l=n_k}$.
Moreover, if any conjunct contains the factors $R_k(t_1,\ldots,t_l)$ and $\neg R_k(t_1,\ldots,t_l)$ at the same time, then such conjunct is removed from the sentence as identically false.
Thus, we obtain an equivalent sentence~$\neg \varphi_4$ in PDF, in which each conjunct contains conditions for satisfaction or dissatisfaction of all sets of its variables to all predicates~${R_k \in {\rm L_{fin}}}$.

\medskip

{\it Step~5.}
The algorithm looks sequentially all conjuncts of the sentence~$\neg \varphi_4$.
For all predicates ${R_k \in {\rm L_\infty}}$, arity of which doesn't exeed~$p$, if any conjunct~$\psi_r$ contains the variables ${t_1,\ldots,t_l}$, where ${t_i \in \{x_1,\ldots,x_p\}}$, but doesn't contain the factor $R_k(t_1,\ldots,t_l)$ or $\neg R_k(t_1,\ldots,t_l)$, then the algorithm replaces the conjunct~$\psi_r$ by the disjunction ${[\psi_r \wedge R_k(t_1,\ldots,t_l)] \vee [\psi_r \wedge \neg R_k(t_1,\ldots,t_l)]}$.
This procedure continues as long as possible for all non-repeating sets of variables ${\{t_1,\ldots,t_l\}\subseteq \{x_1,\ldots,x_p\}}$ of the conjunct, where ${l=n_k}$.
Moreover, for all predicates ${R_k \in {\rm L_\infty}}$ if any conjunct contains the factor of the form $R_k(t_1,...,t,...,t,...,t_l)$ or the factors $R_k(t_1,\ldots,t_l)$ and $\neg R_k(t_1,\ldots,t_l)$ at the same time, then such conjunct is removed from the sentence as false in every ${\rm L}$\nobreakdash-structure.
Thus, we obtain an equivalent sentence~$\neg \varphi_5$ in PDF, in which each conjunct contains conditions for satisfaction or dissatisfaction of all possible sets of its variables to all necessary predicates~${R_k \in {\rm L_\infty}}$.

\medskip

--- If all conjuncts of the current sentence are deleted after performing steps 3--5, then the original sentence~$\varphi$ is true for all ${\rm L}$\nobreakdash-structures and, therefore, belongs to $Th_{ \forall}(\textbf{\em K})$.
In this case, the algorithm outputs <<YES>> and finishes.

--- Otherwise, we get the sentence~$\neg \varphi_5$ in PDF, which equivalent to the sentence~$\neg \varphi$, and each conjunct of the sentence~$\neg \varphi_5$ defines the condition for the existence of some substructure of the language~${\rm L}$.
The algorithm moves to step~6.

\medskip

{\it Step~6.}
The algorithm looks sequentially all conjuncts of the sentence~$\neg \varphi_5$.
For the current conjunct~$\psi_r$ the algorithm constructs the ${\rm L}$\nobreakdash-structure, which is defined by its condition, then checks this ${\rm L}$\nobreakdash-structure for membership in the class~$\textbf{\em K}$.

\medskip

--- If the constructed ${\rm L}$\nobreakdash-structure belongs to the class~$\textbf{\em K}$, then the algorithm outputs <<NO>> and finishes.

--- If the constructed ${\rm L}$\nobreakdash-structure doesn't belong to the class~$\textbf{\em K}$, then the algorithm goes to the next conjunct.

--- If all conjuncts of the sentence~$\neg \varphi_5$ are examined and there are no models from the class~$\textbf{\em K}$ for all of them, then the algorithm outputs <<YES>> and finishes.

\medskip

For the current conjunct~$\psi_r$ of the sentence~$\neg \varphi_5$ the algorithm constructs 
\linebreak
a~$q$\nobreakdash-element ${\rm L}$\nobreakdash-structure ${A_r = \langle A, {\rm L} \rangle}$, the elements of which correspond one-to-one to the variables of the conjunct.

Since the set of minimal forbidden substructures of the class~$\textbf{\em K}$ consists of finite ${\rm L}$\nobreakdash-structures and is recursive, there is a~procedure that allows to find out whether an arbitrary finite ${\rm L}$\nobreakdash-structure belongs to this set.
Using this procedure, one can determine a~set of all axioms $\{\theta\}$ whose negations correspond one-to-one to minimal forbidden substructures of the class~$\textbf{\em K}$ having at most $q$~elements.

To make sure that the ${\rm L}$\nobreakdash-structure~$A_r$ belongs to the class~$\textbf{\em K}$, one need to check whether it doesn't have forbidden substructures corresponding to the set of sentences~$\{\theta\}$, i.\,e. whether there is any existential sentence~$\neg \theta$ that is true in the ${\rm L}$\nobreakdash-structure~$A_r$.

To do this, for each axiom~$\theta$ containing $n$~variables $(n \leqslant q)$ all possible correspondences between the variables ${\{x_1, x_2, ..., x_n\}}$ of the axiom~$\theta$ and the elements ${\{1,2,...,q\}}$ of the ${\rm L}$\nobreakdash-structure~$A_r$ are considered (see table~\ref{tabular:tabl1} for ${n=3}$, ${q=4}$).
For each such correspondence, the truth of~$\neg \theta$ is checked in the ${\rm L}$\nobreakdash-structure~$A_r$.

\begin{table}[h]
\caption{Correspondences between the variables of the axiom~$\theta$ and the elements of the ${\rm L}$\nobreakdash-structure.}
\label{tabular:tabl1}
\begin{center}
\begin{tabular}{|c|c|c|c|}
\hline
\ \ \ №\ \ \  & $\ \ \ \ \ x_1\ \ \ \ \ $ & $\ \ \ \ \ x_2\ \ \ \ \ $ & $\ \ \ \ \ x_3\ \ \ \ \ $ \\
\hline
\textbf{1} & 1 & 2 & 3 \\
\hline
\textbf{2} & 1 & 2 & 4 \\
\hline
\textbf{3} & 1 & 3 & 2 \\
\hline
\textbf{4} & 1 & 3 & 4 \\
\hline
\textbf{5} & 1 & 4 & 2 \\
\hline
\textbf{6} & 1 & 4 & 3 \\
\hline
... & ... & ... & ... \\
\vspace{-4mm} & \  & \  & \  \\
\hline
\textbf{23} & 4 & 3 & 1 \\
\hline
\textbf{24} & 4 & 3 & 2 \\
\hline

\end{tabular}
\end{center}
\end{table}

\medskip

--- If at least one of the sentences~$\{\neg \theta\}$ is true in the ${\rm L}$\nobreakdash-structure~$A_r$, then it doesn't belong to the class~$\textbf{\em K}$.
In this case, the algorithm goes to the next conjunct of the sentence~$\neg \varphi_5$.

--- If all sentences~$\{\neg \theta\}$ are false on the ${\rm L}$\nobreakdash-structure~$A_r$, then it doesn't contain forbidden substructures for the class~$\textbf{\em K}$ and, therefore, belongs to the class~$\textbf{\em K}$.
Thus, for the sentence~$\neg \varphi$ a~model from the class~$\textbf{\em K}$ is constructed.

\medskip

As a~result, the algorithm answers the question whether the universal sentence~$\varphi$ belongs to the theory~$Th_{\forall}(\textbf{\em K})$.
\end{proof}

\end{document}